\newtheorem{theorem}{Theorem}
\newtheorem{lemma}[theorem]{Lemma}
\newtheorem{corrolary}[theorem]{Corrolary}
\begin{document}

\newcommand{\sgn}{\text{sgn}}

\author{Bart\l{}omiej Bzd\c{e}ga}

\address{Str\'o\.zy\'nskiego 15A/20 \\ 60-688 Pozna\'n, Poland}

\email{exul@wp.pl}

\keywords{cyclotomic polynomial, inclusion-exclusion polynomial}

\subjclass{11B83, 11C08}

\title{Inclusion-exclusion polynomials with large coefficients}

\maketitle

\begin{abstract}
We prove that for every positive integer $k$ there exist an inclusion-exclusion polynomial $Q_{\{q_1,q_2,\ldots,q_k\}}$ with the height at least $c^{2^k}\prod_{j=1}^{k-2}q_j^{2^{k-j-1}-1}$, where $c$ is a positive constant and $q_1<q_2<\ldots<q_k$ are pairwise coprime and arbitrary large.
\end{abstract}

\section{Introduction}

The $n$th cyclotomic polynomial is the unique monic polynomial over integers, which roots are all the $n$th primitive roots of unity. It is well-known that if $n=p_1p_2\ldots p_k$, where $p_1,p_2,\ldots,p_k$ are distinct primes, then
\begin{equation} \label{f}
\Phi_n(x) = \frac{(1-x^n)\cdot\prod_{1\le j_1<j_2\le k}\left(1-x^{n/(p_{j_1}p_{j_2})}\right)\cdot\ldots}{\prod_{1\le j_1\le k}(1-x^{n/p_{j_1}})\cdot\prod_{1\le j_1<j_2<j_3\le k}\left(1-x^{n/(p_{j_1}p_{j_2}p_{j_3})}\right)\cdot\ldots}
\end{equation}

Bachman \cite{Bachman-TernaryInEx} defined a slightly more general class of polynomials, called the inclusion-exclusion polynomials. If we replace primes $p_1,p_2,\ldots,p_k$ by pairwise coprime numbers $q_1,q_2,\ldots,q_k$ in the formula above and put $m=q_1q_2\ldots q_k$ instead of $n$, then we receive the definition of the inclusion-exclusion polynomial $Q_\rho$, where $\rho=\{q_1,q_2,\ldots,q_k\}$.

We can expect that the properties of inclusion-exclusion polynomials and cyclotomic polynomials are similar. In particular, we may use the same methods to bound the coefficients of polynomials of these both classes, as long as we do not need the assumption that numbers $p_1,p_2,\ldots,p_k$ are prime and use only formula (\ref{f}). In this note we present an example of this situation.

Troughout the paper we set $n=p_1p_2\ldots p_k$, $m=q_1q_2\ldots q_k$ and $\rho=\{q_1,q_2\ldots,q_k\}$. We also assume that $p_1<p_2<\ldots<p_k$ and $q_1<q_2<\ldots<q_k$.

Let $A_n$ be the largest magnitude of coefficients of given cyclotomic polynomial $\Phi_n$. Similarly we define $A_\rho$ for the polynomial $Q_\rho$. Put
$$M_n=\prod_{j=1}^{k-2}p_j^{2^{k-j-1}-1} \quad \text{and} \quad M_\rho=\prod_{j=1}^{k-2}q_j^{2^{k-j-1}-1}.$$

It was proved in \cite{Bzdega-Height} that $(A_n/M_n)^{2^{-k}} < C+o_k(1)$, where $o_k(1)\to0$ with $k\to\infty$. In the proof it is never used that $p_1,p_2,\ldots,p_k$ are primes, so this estimation is true also for inclusion-exclusion polynomials. More precisely, the following holds.

\begin{theorem} \label{thm-upp}
We have
$$(A_\rho/M_\rho)^{2^{-k}} < C+o_k(1),$$
where $o_k(1)\to0$ with $k\to\infty$.
\end{theorem}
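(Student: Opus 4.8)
The plan is to carry over, essentially line for line, the proof of the corresponding bound for cyclotomic polynomials from \cite{Bzdega-Height}. The observation making this possible is that $Q_\rho$ is \emph{defined} to be formula (\ref{f}) with $q_1,\dots,q_k$ in place of $p_1,\dots,p_k$ and $m$ in place of $n$, so every manipulation in \cite{Bzdega-Height} that appeals only to (\ref{f}), and never to the primality of the $p_i$, applies verbatim to $Q_\rho$. I would go through that argument step by step to confirm this; the only external input required is Bachman's theorem \cite{Bachman-TernaryInEx} that $Q_\tau\in\mathbb Z[x]$ whenever $\tau$ consists of pairwise coprime positive integers.

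Write $\rho_j=\{q_1,\dots,q_j\}$ and $m_j=q_1\cdots q_j$. The structural facts to transfer first are all formal. Grouping the factors in (\ref{f}) by the size of the subset of $\{q_1,\dots,q_j\}$ that indexes each of them, and separating the subsets containing $q_j$ from those that do not, gives the identity
$$Q_{\rho_j}(x)=\frac{Q_{\rho_{j-1}}(x^{q_j})}{Q_{\rho_{j-1}}(x)}\qquad(2\le j\le k),$$
the exact analogue of $\Phi_{np}(x)=\Phi_n(x^p)/\Phi_n(x)$ for $p\nmid n$; its derivation uses only that $q_j$ is coprime to $m_{j-1}$, which holds by pairwise coprimality, and by Bachman's theorem it is an identity in $\mathbb Z[x]$, so that $Q_{\rho_{j-1}}(x)\mid Q_{\rho_{j-1}}(x^{q_j})$ in $\mathbb Z[x]$. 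Equally formal are $\deg Q_{\rho_j}=\prod_{i\le j}(q_i-1)$, the self-reciprocity of $Q_{\rho_j}$, and the base estimates $A_{\rho_1}=1$ (as $Q_{\rho_1}(x)=1+x+\dots+x^{q_1-1}$) and $A_{\rho_2}=1$ (the binary case, where all coefficients lie in $\{-1,0,1\}$ by the same argument that handles $\Phi_{p_1p_2}$).

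With these in hand I would run the induction of \cite{Bzdega-Height}. Its engine is a coefficient-transfer estimate: expressing the coefficients of $Q_{\rho_j}$ through those of $Q_{\rho_{j-1}}$ by means of the recursion above, localized using self-reciprocity and the fact that $Q_{\rho_j}(x)Q_{\rho_{j-1}}(x)=Q_{\rho_{j-1}}(x^{q_j})$ is supported on multiples of $q_j$, one obtains a bound of the shape
$$A_{\rho_j}\le K\cdot q_1q_2\cdots q_{j-2}\cdot A_{\rho_{j-1}}^2$$
with $K$ absolute (valid for $j$ past a fixed threshold). Since a short computation gives $M_{\rho_j}=M_{\rho_{j-1}}^2\cdot q_1q_2\cdots q_{j-2}$, dividing yields $A_{\rho_j}/M_{\rho_j}\le K\,(A_{\rho_{j-1}}/M_{\rho_{j-1}})^2$, hence $(A_{\rho_j}/M_{\rho_j})^{2^{-j}}\le K^{2^{-j}}(A_{\rho_{j-1}}/M_{\rho_{j-1}})^{2^{-(j-1)}}$; telescoping over $j$ up to $k$ collapses the right-hand side to an absolutely convergent product times a bounded base term, which is $C+o_k(1)$, the contributions from small $j$ and from the tail of the product being absorbed into the $o_k(1)$.

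The one genuine obstacle I anticipate is the bookkeeping needed to confirm that \emph{every} intermediate object in \cite{Bzdega-Height}, such as the quotient $Q_{\rho_{j-1}}(x^{q_j})/Q_{\rho_{j-1}}(x)$, any truncated products used to isolate individual coefficients, and the auxiliary polynomials occurring in the transfer estimate, stays a bona fide polynomial or power series with integer coefficients when the $q_i$ are merely pairwise coprime, and that the degree- and support-counting behind the transfer estimate never silently uses the irreducibility of the individual factors. Since all the identities involved hold in $\mathbb Q(x)$ and have left-hand sides that are integral by \cite{Bachman-TernaryInEx}, this is a routine verification; granting it, every inequality of \cite{Bzdega-Height} goes through without change.
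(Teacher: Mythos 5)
Your proposal is correct and is essentially the paper's own argument: the paper justifies Theorem \ref{thm-upp} solely by observing that the proof of $(A_n/M_n)^{2^{-k}}<C+o_k(1)$ in \cite{Bzdega-Height} relies only on formula (\ref{f}) and never on the primality of $p_1,\ldots,p_k$, so it transfers verbatim to $Q_\rho$. You simply flesh out that transfer (the recursion $Q_{\rho_j}(x)=Q_{\rho_{j-1}}(x^{q_j})/Q_{\rho_{j-1}}(x)$, the identity $M_{\rho_j}=M_{\rho_{j-1}}^2q_1\cdots q_{j-2}$, the telescoping) in more detail than the paper does, and those details check out.
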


The aim of this paper is to give an example of inclusion-exclusion polynomials $Q_\rho$ for which $q_1,q_2,\ldots,q_k$ are arbitrary large and the opposite inequality holds, with a smaller constant $c$ replacing $C$. Our main result is the following theorem.

\begin{theorem} \label{thm-low}
There exist a positive constant $c$ such that for every $k$ and every $N$ there exist inclusion-exclusion polynomial $Q_\rho$ for which $q_1>N$ and
$$(A_\rho/M_\rho)^{2^{-k}} > c+o_k(1),$$
where $o_k(1)\to0$ with $k\to\infty$.
\end{theorem}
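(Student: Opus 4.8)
The plan is to construct $q_1<q_2<\cdots<q_k$ recursively and to bound $A_\rho$ from below by iterating the identity
\[
Q_{\{q_1,\dots,q_j\}}(x)=\frac{Q_{\{q_1,\dots,q_{j-1}\}}(x^{q_j})}{Q_{\{q_1,\dots,q_{j-1}\}}(x)},
\]
which holds whenever $q_j$ is coprime to $q_1\cdots q_{j-1}$ for the same reason that $\Phi_{np}(x)=\Phi_n(x^p)/\Phi_n(x)$ holds when $p\nmid n$: both sides are the corresponding products from the inclusion--exclusion formula (\ref{f}). Writing $R_{j-1}(x)=1/Q_{\{q_1,\dots,q_{j-1}\}}(x)$ as a formal power series and reading off the coefficient of $x^i$ in the product $Q_{\{q_1,\dots,q_{j-1}\}}(x^{q_j})\,R_{j-1}(x)$, one gets
\[
[x^i]\,Q_{\{q_1,\dots,q_j\}}=\sum_{t\ge 0}\bigl([x^t]Q_{\{q_1,\dots,q_{j-1}\}}\bigr)\bigl([x^{\,i-tq_j}]R_{j-1}\bigr).
\]
For $i$ near the centre this is a sum of order $\deg Q_{\{q_1,\dots,q_{j-1}\}}$ many terms; the whole game is to choose $q_j$ and $i$ so that the large terms reinforce instead of cancelling, in which case $Q_{\{q_1,\dots,q_j\}}$ acquires a coefficient of magnitude comparable to (number of reinforcing terms)\,$\times\,A_{\{q_1,\dots,q_{j-1}\}}\times$(largest magnitude of a coefficient of $R_{j-1}$).

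Two observations turn this into a clean recursion. First, the height of $R_{j-1}$ is comparable to that of $Q_{\{q_1,\dots,q_{j-1}\}}$: indeed $R_{j-1}(x)\equiv Q_{\{q_1,\dots,q_{j-1},q\}}(x)\pmod{x^{q}}$ for any coprime $q>q_{j-1}$, so long initial segments of $R_{j-1}$ are literally segments of inclusion--exclusion polynomials, while the same identity read in the other direction presents $R_j=1/Q_{\{q_1,\dots,q_j\}}$ through a convolution of exactly the same shape with the two sequences interchanged. Second, a degree count shows that the number of reinforcing terms one can realistically secure is of order $q_1q_2\cdots q_{j-2}$, which is precisely what is needed. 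Together these make the inductive step take the form
\[
A_{\{q_1,\dots,q_j\}}\ \ge\ c_0\,(q_1q_2\cdots q_{j-2})\,A_{\{q_1,\dots,q_{j-1}\}}^{\,2}
\]
for an absolute constant $c_0>0$, starting from the trivial $A_{\{q_1\}}=A_{\{q_1,q_2\}}=1$ (binary inclusion--exclusion polynomials have all coefficients in $\{-1,0,1\}$ with an explicitly describable sign pattern, exactly as for $\Phi_{pq}$). Telescoping this recursion gives $A_\rho\ge c_0^{\,2^{k-2}-1}\prod_{j=1}^{k-2}q_j^{\,2^{k-j-1}-1}=c_0^{\,2^{k-2}-1}M_\rho$, hence $(A_\rho/M_\rho)^{2^{-k}}\ge c_0^{1/4}\bigl(1+o_k(1)\bigr)$, which is the theorem with $c=c_0^{1/4}$; since the construction never bounds the $q_j$ from above, the requirement $q_1>N$ (and so $q_j>N$ for all $j$) costs nothing, and the last two factors $q_{k-1},q_k$, which do not occur in $M_\rho$, serve only to carry out the final two convolution steps.

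The work is entirely in the inductive step, which has an arithmetic and a structural part. Arithmetically, $q_j$ must be chosen coprime to $q_1\cdots q_{j-1}$, larger than $q_{j-1}$, and in a residue class modulo a suitable divisor of $q_1\cdots q_{j-1}$ that aligns the sampling $t\mapsto i-tq_j$ in the convolution above with the positions of the large coefficients supplied by the induction hypothesis; this is where permitting composite $q_j$ is essential and is exactly what the cyclotomic case lacks, since by the Chinese remainder theorem one can meet a prescribed residue and a prescribed size at once, whereas Dirichlet's theorem on primes in progressions forces the size up. Structurally --- and this is \emph{the real content} --- one must formulate an invariant strong enough to preclude cancellation, say that $Q_{\{q_1,\dots,q_j\}}$ and $R_j$ each carry a long progression of equal-sign, nearly extremal coefficients located well inside the polynomial, and then verify that after the step the newly produced large coefficient of $Q_{\{q_1,\dots,q_j\}}$ is again wrapped in such a progression, so that the induction can proceed. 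I expect this last point --- controlling how $\sum_t[x^t]Q_{\{q_1,\dots,q_{j-1}\}}\cdot[x^{i-tq_j}]R_{j-1}$ behaves as $i$ runs over a progression, and showing that the good structure regenerates --- to be the main obstacle; the telescoping and the coprimality/size accounting are then routine.
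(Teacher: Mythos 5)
Your proposal is a program, not a proof: everything rests on the inductive step $A_{\{q_1,\dots,q_j\}}\ge c_0\,(q_1\cdots q_{j-2})\,A_{\{q_1,\dots,q_{j-1}\}}^2$, and that step is never established. You yourself flag the reinforcement/non-cancellation claim as ``the main obstacle,'' and it genuinely is: to get it you would need, simultaneously, (i) about $q_1\cdots q_{j-2}$ indices $t$ at which $[x^t]Q_{\{q_1,\dots,q_{j-1}\}}$ has magnitude comparable to $A_{\{q_1,\dots,q_{j-1}\}}$, (ii) the arithmetic progression $i-tq_j$ to land on coefficients of $R_{j-1}$ that are likewise near-extremal, (iii) all the products to share a sign, and (iv) this structure to regenerate after the step so the induction can continue. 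None of these is proved, and (ii) already hides a further unproved assertion: your ``height of $R_{j-1}$ is comparable to that of $Q_{\{q_1,\dots,q_{j-1}\}}$'' only yields, via the congruence $R_{j-1}\equiv Q_{\{q_1,\dots,q_{j-1},q\}}\pmod{x^q}$, an \emph{upper} bound by the height of a $j$-element inclusion--exclusion polynomial (which by Theorem \ref{thm-upp} can be far larger than $A_{\{q_1,\dots,q_{j-1}\}}$), whereas the recursion needs a matching \emph{lower} bound on the sampled coefficients. The telescoping, the base case, and the exponent bookkeeping are all correct, but they are the easy part.

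The paper avoids this entire difficulty by not inducting on $k$ at all. It imports the lemma of Bateman, Pomerance and Vaughan (Lemma \ref{lem-BPV} here): if $q_j\equiv 2r\pm1\pmod{4r}$ for all $j$, then $A_\rho\ge r^{2^{k-1}}/m$; the original proof uses only formula (\ref{f}) and never the primality of the moduli, so it transfers verbatim to inclusion--exclusion polynomials. One then sets $r=Nk!$ and $q_j=(4j-2)r+1$, checks pairwise coprimality by the Euclidean algorithm, and computes that $(A_\rho/M_\rho)^{2^{-k}}$ tends to $\prod_{j=1}^\infty(4j-2)^{-2^{-j-1}}\approx 0.487$. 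Your remark about composite moduli being the essential freedom is exactly right, but it is exploited differently: the point is that each $q_j$ can be taken as the \emph{smallest} admissible element of its residue class mod $4r$, so that $r/q_j$ is bounded below by an absolute constant depending only on $j$, which is what Dirichlet's theorem cannot guarantee for primes. If you want to salvage your approach, you would in effect be reproving a convolution-based analogue of the BPV lemma from scratch; citing and adapting that lemma is the short path.
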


\section{Proof of Theorem \ref{thm-upp}}

Bateman, Pomerance and Vaughan (\cite{BatemanPomeranceVaughan-Size}, Lemma 5, p. 188) proved that if $r$ is a positive integer and $p_j\equiv2r\pm1\pmod{4r}$ for $j=1,2,\ldots,k$, then $A_n\ge r^{2^{k-1}}/n.$

By checking that their proof uses only formula (\ref{f}) and that the assumption that $p_1,p_2,\ldots,p_k$ are primes is not required, we deduce the following inclusion-exclusion version of this lemma.

\begin{lemma} \label{lem-BPV}
Let $r$ be a positive integer. If $q_j\equiv2r\pm1\pmod{4r}$ for $j=1,2,\ldots,k$, then $A_\rho \ge r^{2^{k-1}}/m$.
\end{lemma}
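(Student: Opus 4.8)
The plan is to transcribe into the inclusion--exclusion setting the proof of Lemma~5 of \cite{BatemanPomeranceVaughan-Size} (p.~188). So the work splits in two: reproduce that argument, and check at every step that it uses $q_1,\ldots,q_k$ only through the product formula (\ref{f}), never through their primality.

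For the substantive half I would start from the identity that (\ref{f}) encodes,
$$Q_\rho(x)=\prod_{S\subseteq\{1,\ldots,k\}}\bigl(1-x^{m/q_S}\bigr)^{(-1)^{|S|}},\qquad q_S:=\prod_{j\in S}q_j,$$
and from its two purely formal consequences: the recursion $Q_\rho(x)=Q_{\rho\setminus\{q_k\}}(x^{q_k})/Q_{\rho\setminus\{q_k\}}(x)$, and the telescoping identity $\prod_{\sigma\subseteq\rho}Q_\sigma(x)=x^m-1$ (with the convention $Q_\emptyset(x):=x-1$). Using the latter to rewrite $1/Q_{\rho\setminus\{q_k\}}(x)$ as the polynomial $\prod_{\sigma\subsetneq\rho\setminus\{q_k\}}Q_\sigma(x)$ times the power series $-(1-x^{q_1\cdots q_{k-1}})^{-1}$, and iterating down the $\sigma$'s, one expresses a suitably chosen coefficient $a_\rho(t_0)$ of $Q_\rho$ as an explicit signed sum indexed by tuples of bounded nonnegative integers. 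One then locates $t_0$ at the place where these contributions accumulate and invokes the hypothesis $q_j\equiv2r\pm1\pmod{4r}$: since $(2r\pm1)^2\equiv1$ and $(2r+1)(2r-1)\equiv-1\pmod{4r}$, every exponent $m/q_S$ lies in one of the residue classes $\pm1,\pm(2r-1)$ modulo $4r$, and --- for a suitable choice of the sign in $2r\pm1$ for each $j$ --- the terms of the sum then share a common sign over a sub-family of cardinality $\asymp r^{2^{k-1}}/m$, so that $A_\rho\ge|a_\rho(t_0)|\ge r^{2^{k-1}}/m$ with no cancellation. The doubly exponential exponent is what the recursion predicts, since adjoining a new $q_j$ roughly squares the number of relevant tuples.

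The other half is a routine audit. Formula (\ref{f}) is the \emph{definition} of $Q_\rho$ for pairwise coprime $q_j$; the recursion and the telescoping identity are formal consequences of it; the combinatorial estimates use only the ordering $q_1<\cdots<q_k$; and the sign analysis uses only the congruence condition. Hence the cyclotomic proof carries over word for word after replacing $n,p_j,\Phi_n,A_n$ by $m,q_j,Q_\rho,A_\rho$. The only genuine difficulty, inherited unchanged from \cite{BatemanPomeranceVaughan-Size}, is the sign control in the combinatorial sum for $a_\rho(t_0)$ --- showing that the contributions of different subsets $S$ reinforce rather than cancel, which is exactly the role played by the hypothesis $q_j\equiv2r\pm1\pmod{4r}$ with the signs chosen appropriately.
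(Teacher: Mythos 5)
Your proposal takes exactly the same route as the paper: the paper's entire justification for this lemma is the observation that the Bateman--Pomerance--Vaughan proof of their Lemma~5 relies only on formula~(\ref{f}) and never on the primality of $p_1,\ldots,p_k$, so it transfers verbatim to $Q_\rho$ after replacing $n,p_j,\Phi_n,A_n$ by $m,q_j,Q_\rho,A_\rho$. Your reconstruction of the internal BPV argument (the recursion, the telescoping identity, and the sign analysis via $q_j\equiv2r\pm1\pmod{4r}$) is more detail than the paper itself supplies, but the essential content and the burden of verification are identical.
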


Now we are ready to prove the main result of this paper.

\begin{proof}[Proof of Theorem \ref{thm-low}]
Let $r=Nk!$ and $q_j=(4j-2)r+1$ for $j=1,2,\ldots,k$. First we check that the numbers $q_1,q_2,\ldots,q_k$ are pairwise coprime. By the Euclid algorithm we have
$$(q_i,q_j) = ((4i-2)r+1, (4j-2)r+1) = (4(i-j)r, (4j-2)r+1) = 1,$$
because every prime divisor of $4(i-j)r$ divides $N$ or is not greater than $k$ and the number $(4j-2)r+1$ has no such prime divisors.

By Lemma \ref{lem-BPV} we have
\begin{align*}
(A_\rho/M_\rho)^{2^{-k}} & > \left(\frac{r^{2^{k-1}}/m}{\prod_{j=1}^{k-2}q_j^{2^{k-j-1}-1}}\right)^{2^{-k}} = \left(\frac{r^{2^{k-1}}/q_k}{\prod_{j=1}^{k-1}q_j^{2^{k-j-1}}}\right)^{2^{-k}} \\
& = \left(\frac{r}{q_k}\prod_{j=1}^{k-1}\left(\frac{r}{q_j}\right)^{2^{k-j-1}}\right)^{2^{-k}} = \prod_{j=1}^\infty(4j-2)^{-2^{-j-1}} + o_k(1).
\end{align*}
The product is convergent and it equals approximately $0.487$. It completes the proof.
\end{proof}

\section{Discussion}

An immediate consequence of Theorems \ref{thm-upp} and \ref{thm-low} is the following fact.

\begin{corrolary}
The constant
$$\hat{c} = \lim_{k\to\infty}\lim_{N\to\infty}\sup_{q_1>N}(A_\rho/M_\rho)^{2^{-k}}$$
exists and it is positive.
\end{corrolary}

The computing of $\hat{c}$ is definitely challenging. At this moment we only know that $0.487 < \hat{c} < 0.9541$, where the second inequality is proved in \cite{Bzdega-Height}. Even more challenging would by proving the analogous result for cyclotomic polynomials.

\section*{Acknowledgments}

The author would like to thank Wojciech Gajda for his remarks on this note.

\end{document}